\documentclass{amsart}
\usepackage[utf8]{inputenc}
\usepackage{amsmath, amsthm, amssymb, amsfonts, mathtools, stmaryrd, latexsym, enumerate}

\pdfoutput=1 % pictures on arXiv

\usepackage[dvipsnames]{xcolor}
\usepackage{float}
\usepackage{enumitem}
\usepackage{tikz}
\usepackage{comment}
\usepackage[linesnumbered,lined,commentsnumbered]{algorithm2e}

\usepackage{listings}
\usepackage{graphicx}
\usepackage{caption}
\usepackage[all]{xypic}
\usepackage{verbatim}
\usepackage{subcaption}

\usepackage[colorlinks]{hyperref}
\hypersetup{
    colorlinks,
    linkcolor={BrickRed},
    citecolor={ForestGreen},
    urlcolor={NavyBlue}
}
\usepackage[nameinlink]{cleveref}

\usepackage[parfill]{parskip}
\usepackage[margin=1in]{geometry}

\theoremstyle{plain}
\newtheorem{theorem}{Theorem}
\numberwithin{theorem}{section}
\newtheorem{maintheorem}{Theorem}

\newtheorem{proposition}[theorem]{Proposition}
\newtheorem{lemma}[theorem]{Lemma}
\newtheorem{corollary}[theorem]{Corollary}
\newtheorem{definition}[theorem]{Definition}

\theoremstyle{definition}
\newtheorem{remark}[theorem]{Remark}
\newtheorem{example}[theorem]{Example}

\newtheorem*{question}{Question}
\newtheorem*{notation}{Notation}
\newtheorem*{claim}{Claim}

\newcommand{\bbC}{\mathbb{C}}
\newcommand{\bbG}{\mathbb{G}}
\newcommand{\bbP}{\mathbb{P}}

\newcommand{\bbZ}{\mathbb{Z}}

\newcommand{\Hrk}{\operatorname{Hrk}}
\newcommand{\Sym}{\mathrm{Sym}}

\usepackage{dsfont}

\title{Finiteness of Hadamard ranks}

\author{Dario Antolini}
\address{%
	Dario Antolini\newline
	Dipartimento di Matematica, Università di Trento\newline
	Email: \href{mailto:dario.antolini-1@unitn.it}{dario.antolini-1@unitn.it}
}

\author{Edoardo Ballico}
\address{%
	Edoardo Ballico\newline
	Dipartimento di Matematica, Università di Trento\newline
	Email: \href{mailto:edoardo.ballico@unitn.it}{edoardo.ballico@unitn.it}
}

\author{Alessandro Oneto}
\address{%
	Alessandro Oneto\newline
	Dipartimento di Matematica, Università di Genova\newline
	Email: \href{mailto:alessandro.oneto@unige.it}{alessandro.oneto@unige.it}
}

\keywords{Hadamard products of algebraic varieties, Hadamard ranks, tensor decompositions}
\subjclass{14M99, 14N07}
	
\begin{document}

\begin{abstract} 
    The Hadamard rank of a point with respect to a projective variety is, if it exists, the minimum number of points of the variety whose coordinate-wise product is the given point. We classify the projective varieties for which the Hadamard rank is finite for any point. As a by-product we obtain the finiteness of the Hadamard rank with respect to varieties of tensors, such as Grassmannians, Chow varieties, varieties of reducible forms and their secant varieties, complementing previous known results on secant varieties of Segre-Veronese varieties. We prove sharp upper bounds on the maximum Hadamard rank for certain families of algebraic varieties: this is a consequence of a result on the lower semi-continuity of the Hadamard rank for curves that do not contain points with at least two zero coordinates.
\end{abstract}

\maketitle

\section{Introduction}
In \cite{CMS10:GeometryRBM}, the authors introduced the definition of \textit{Hadamard product} of algebraic varieties to study statistical models known as \textit{Restricted Boltzmann Machines} by means of algebraic and tropical geometry. Such approach was continued and generalized in \cite{CTY10:Implicitization,montufar2015discrete,montufar2018restricted}. 

Tensors encoding the joint probabilities that lie on Discrete Restricted Boltzmann Machines can be expressed as entry-wise product of low-rank tensors. This interpretation has been considered in \cite{seigal2018mixtures,oneto2023hadamard}. A general geometric framework for such decompositions of tensors was formalized in \cite{hadamardranks} where the definition of \textit{Hadamard ranks} was introduced with respect to any algebraic variety. This notion can be regarded as a multiplicative version of the classical additive notion of rank with respect to any algebraic variety which is geometrically studied through secant varieties, see \cite{oneto2025ranks} for a recent survey.

\begin{notation}
    Let $V$ be a $(N+1)$-dimensional $\bbC$-vector space and fix a basis $\{e_0,\ldots,e_N\}$. For any vector $p = p_0e_0 +\cdots + p_Ne_N$, we denote by $(p_0:\cdots:p_N)\in\bbP V = \bbP^N$ the corresponding projective point.     
\end{notation}

\begin{definition}\label{def:hadamard_product}
    Let $p = (p_0:\cdots:p_N), \ q=(q_0:\cdots:q_N)\in\bbP V$. Their \emph{Hadamard product} is defined, if it exists, as the coordinate-wise product $p\star q = (p_0q_0:\cdots:p_Nq_N)$.
\end{definition}
Let $X \subset \bbP V$ be any irreducible projective variety.
\begin{definition}\label{def:Hadamard_rank}
    The \emph{Hadamard-$X$-rank} of a point $q \in \bbP V$ is defined as 
    \[
        \Hrk_X(q) = \min\{m \mid q = p_1 \star \cdots \star p_m, \ p_i \in X\},
    \]
    with the convention that $\Hrk_X(q) = \infty$ if the set of such decompositions is empty.
\end{definition}
The question that we address in this paper, which naturally arises from this definition, is the following.

\begin{question}\label{question}
    For which $X \subset \bbP V$ and $q \in \bbP V$ the Hadamard-$X$-rank of $q$ is finite? For which $X\subset \bbP V$ the Hadamard-$X$-rank is finite for every point of $\bbP V$?    
\end{question}

Having in mind applications to tensor decompositions, in \cite{hadamardranks}, the authors focused on toric varieties, such as Segre-Veronese varieties, and their secant varieties. Since toric varieties are Hadamard-idempotent, see e.g., \cite[Proposition 4.7]{FOW:MinkowskiHadamard}, the Hadamard-rank with respect to toric varieties is infinite for every point outside the variety: in particular, that is the case for Segre-Veronese varieties. At the same time, it was proven in \cite[Corollary 3.8]{hadamardranks} that the Hadamard-rank with respect to secant varieties of toric varieties is finite everywhere. In the case of secants of Segre-Veronese varieties, this translates to the following: for any $r \geq 2$, any (partially-symmetric) tensor can be expressed as entry-wise product of finitely many (partially-symmetric) tensors of rank at most $r$. 

In the literature of tensor decompositions, there are several other interesting algebraic varieties, for example, Grassmannians, Chow varieties and varieties of reducible forms. All these are not toric varieties and the results of \cite{hadamardranks} about finiteness of Hadamard ranks do not apply.

In \cite[Corollary 3.40]{hadamardranks}, it was shown that the \textit{generic} Hadamard-$X$-rank is finite if and only if the variety $X$ is \textit{concise} (i.e., it is not contained in any coordinate hyperplane) and it is not contained in a \textit{binomial hypersurface}. However, the finiteness of the  generic Hadamard-$X$-rank does not guarantee that the Hadamard-$X$-rank is finite for \textit{any} point, see \cite[Example 3.10]{hadamardranks}. 

\subsection*{Main results.}
In this paper, we give a complete classification of the irreducible varieties for which the Hadamard-$X$-rank is finite for \textit{any} point of the ambient space.

\begin{notation}
    We write $H_i = \{ x_i = 0 \}$ for the $i$th coordinate hyperplane in $\bbP^N$, $i \in \{0,\ldots,N\}$.
\end{notation}

\begin{maintheorem}
\label{thm:maximum_finite}
    The Hadamard-$X$-rank is finite for every point if and only if $X$ is strongly concise, namely, for any $i \in \{0,\ldots,N\}$, $(X \cap H_i) \not\subset \bigcup_{j\neq i} H_j$.
\end{maintheorem}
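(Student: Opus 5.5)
The plan is to prove necessity by looking at a single point with exactly one zero coordinate. Sufficiency will be handled in two stages: first the torus $T=\{x_0\cdots x_N\neq 0\}\subset\bbP^N$, then the points on coordinate hyperplanes. Throughout I use that $T$ is a group under $\star$ (with coordinatewise inverses), and that for a decomposition $q=p_1\star\cdots\star p_m$ we have $q_j\neq 0$ if and only if $(p_k)_j\neq 0$ for all $k$.

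\emph{Necessity.} Suppose that for some $i$ we have $(X\cap H_i)\subset\bigcup_{j\neq i}H_j$. Take $q$ with $q_i=0$ and $q_j=1$ for all $j\neq i$, and suppose $q=p_1\star\cdots\star p_m$ with $p_k\in X$.
\begin{itemize}
\item Since $q_i=0$, some $p_k$ has $(p_k)_i=0$, so $p_k\in X\cap H_i$.
\item Since $q_j\neq 0$ for $j\neq i$, every coordinate $(p_k)_j$ with $j\neq i$ is nonzero, so $p_k\notin\bigcup_{j\neq i}H_j$.
\end{itemize}
This contradicts the hypothesis, so $\Hrk_X(q)=\infty$. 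This argument also covers the case $X\subset H_i$, which is not strongly concise as soon as $N\geq 1$.

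\emph{Sufficiency on $T$.} Assume $X$ is strongly concise. Then $X$ is concise, so $X^\circ=X\cap T$ is dense in $X$. I will show that $X$ lies in no binomial hypersurface, and then invoke \cite[Corollary 3.40]{hadamardranks} to get finite generic Hadamard-$X$-rank.
\begin{itemize}
\item Suppose $X\subset\{x^a=c\,x^b\}$ with $c\neq 0$. After cancelling common factors, $x^a$ and $x^b$ have disjoint supports and are not both constant, so some variable $x_i$ occurs in exactly one of the two monomials.
\item By strong concision there is a point $p\in X$ whose only zero coordinate is $p_i$.
\item Evaluating the binomial at $p$, one side vanishes and the other is nonzero, a contradiction.
\end{itemize}
I expect this to be the main obstacle: one has to check carefully that Corollary 3.40 applies in exactly this form, and to handle degenerate binomials (for instance $c=0$, which amounts to lying in a coordinate hyperplane and is excluded by concision).

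Finite generic rank means that for some $m$ the constructible set $U=(X^\circ)^{\star m}$ is dense in $T$, so it contains a dense open subset $U'$ of $T$. Fix $g\in T$. Both $U'$ and $g\star (U')^{-1}$ are dense open in $T$, so they meet. This gives $u_1,u_2\in U'$ with $u_1=g\star u_2^{-1}$, that is, $g=u_1\star u_2$. Hence every point of $T$ has Hadamard-$X$-rank at most $2m$.

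\emph{Sufficiency off $T$.} Let $q$ have zero set $S=\{i: q_i=0\}$, with $S\neq\{0,\ldots,N\}$. For each $i\in S$, strong concision gives a point $p^{(i)}\in X$ whose only zero coordinate is the $i$th. Set $r=\star_{i\in S}\,p^{(i)}$; its zero set is exactly $S$. Define $g\in T$ by $g_j=q_j/r_j$ for $j\notin S$ and $g_j=1$ for $j\in S$. Then $q=g\star r$ as projective points. Writing $g$ as a product of at most $2m$ points of $X^\circ$ gives $\Hrk_X(q)\leq 2m+|S|<\infty$.
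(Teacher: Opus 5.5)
Your proof is correct and follows the paper's argument. Necessity uses the same point with a single zero coordinate. Sufficiency factors $q=g\star r$, where $r$ is a product of $|S|$ points of $X$ each having exactly one zero coordinate and $g$ lies in the torus; this gives the same bound $2\Hrk_X^\circ+|S|$ as \Cref{prop:bound_zeroes}.

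There are two differences. First, you reprove the torus bound of \Cref{thm:finitess_torus} by the translate-of-a-dense-open argument instead of citing it. Second, you actually justify why a strongly concise irreducible $X$ lies in no binomial hypersurface. Your cancellation step, which tacitly uses that $X$ is irreducible and concise, is genuinely needed here: the reducible binomial curve $\{x_0^2x_1=x_0x_1^2\}\subset\bbP^2$ is itself strongly concise, so the paper's blanket remark that binomial hypersurfaces are not strongly concise only holds after this reduction.
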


In \Cref{ssec:tensors}, we will use \Cref{thm:maximum_finite} to show the finiteness of the Hadamard rank with respect to important varieties of tensors in the literature, such as Grassmannians, Chow varieties, varieties of reducible forms and tangential varieties. These results complement \cite[Corollary 3.9]{hadamardranks} on finiteness of Hadamard ranks with respect to secant varieties of Segre-Veronese varieties, justifying and supporting a new research direction on the study of different Hadamard ranks of tensors and their properties. 

For example, once we know that the maximum Hadamard-$X$-rank is finite, it is natural to look for bounds. We prove that under certain geometric conditions the upper bound is given by the dimension of the ambient space. We also note that such upper bound is sharp (\Cref{example:sharp}).

\begin{maintheorem}
\label{thm:maximum_notmanyzeroes}
    Let $X \subset \bbP^N$ be an irreducible variety of dimension $n\leq N/2$ such that $X \cap \Delta_{N-n-1} = \emptyset$. Then, $\Hrk_X(p) \leq N$ for any $p\in \bbP^N$.
\end{maintheorem}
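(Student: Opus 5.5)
The plan is to treat the coordinate torus and the zero pattern of $p$ separately, and then fit the two counts together so the total is at most $N$. I read $\Delta_{N-n-1}$ as the union of all coordinate subspaces of dimension $N-n-1$. With this reading, the hypothesis says that every point of $X$ has at most $n$ zero coordinates. Let $T=\bbP^N\setminus\bigcup_i H_i$ be the torus, and let $Z(p)\subset\{0,\ldots,N\}$ be the zero set of $p$, with $k=|Z(p)|\le N$. In any decomposition $p=p_1\star\cdots\star p_m$ we have $Z(p)=\bigcup_i Z(p_i)$, and each $|Z(p_i)|\le n$.

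\emph{Step 1: producing prescribed zeros.} Take $I$ with $|I|=n$. Then $X\cap\bigcap_{i\in I}H_i$ is nonempty, since an $n$-dimensional variety meets a codimension-$n$ linear space. By hypothesis every point of this intersection has zero set exactly $I$, so it is finite and lies in $\bigcap_{i\in I}H_i$ minus the other coordinate hyperplanes. For $|I|=j<n$, the same dimension count gives $\dim(X\cap\bigcap_{i\in I}H_i)\ge n-j$. I would argue that a general point of a component of this intersection has zero set exactly $I$. The reason is that otherwise the whole component lies in some $H_l$ with $l\notin I$; iterating this pushes the locus of points with more than $n$ zeros to be nonempty, contradicting $X\cap\Delta_{N-n-1}=\emptyset$. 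As a consequence, $X$ is strongly concise, so by \Cref{thm:maximum_finite} the Hadamard rank is already finite and only the bound needs work. Covering $Z(p)$ by $\lceil k/n\rceil$ subsets of size at most $n$, I obtain $s=\lceil k/n\rceil$ points $x_1,\ldots,x_s\in X$ with $Z(x_1\star\cdots\star x_s)=Z(p)$. Moreover, these points move in families of dimension $n-|I_j|$, or can be chosen among several points.

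\emph{Step 2: filling the torus part.} Next I need $p=x_1\star\cdots\star x_s\star y_1\star\cdots\star y_t$ with $y_i\in X\cap T$. Equivalently, $p$ must lie in the coordinate torus $T_{Z}$ of $\bigcap_{i\in Z(p)}H_i$, which has dimension $N-k$, and be reached as a product. For this I use the dimension growth of iterated Hadamard products inside a torus. If $A\subset T_Z$ is irreducible and not contained in a translate of a proper subtorus, then $\dim(A\star(X\cap T))>\dim A$ until the dimension is full. The hypothesis $n\le N/2$ together with $X\cap\Delta_{N-n-1}=\emptyset$ should prevent $X$ from lying in a binomial hypersurface, using \cite[Corollary 3.40]{hadamardranks}. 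Starting from $X\cap T$, of dimension $n$, this gives full dimension after about $N-n$ further factors. The families from Step 1 contribute extra dimensions when projected to $T_Z$.

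\emph{Step 3: passing from generic to every point.} Generic points of the torus are not enough: I need every point. The standard trick applies here. If $B\subset T_Z$ is constructible and dense, then $B\star B^{\prime}=T_Z$ for dense $B,B^{\prime}$, since $p\star(B^{\prime})^{-1}$ and $B$ are both dense and therefore meet. To keep the count at most $N$ rather than doubling it, I would instead use the lower semicontinuity of the Hadamard rank for curves avoiding points with two zero coordinates, announced in the abstract. Concretely, I cut $X$ by general linear spaces to get curves $C\subset X$ satisfying that condition, for which the rank at special points is bounded by the generic rank. I then balance the budget, aiming for $s+t\le\lceil k/n\rceil+(N-n)\le N$ when $k\le n(n)$, and use $n\le N/2$ in the remaining range.

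The main obstacle is Step 3: making the count exactly $N$ for \emph{every} point, rather than for a generic one. This is the point where closure and constructibility issues arise and where the semicontinuity statement must be invoked carefully.
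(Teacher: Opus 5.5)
\textbf{There is a genuine gap, located exactly at the step you flag as the main obstacle.} Nothing in your Steps 1--3 shows that \emph{every} point is a product of at most $N$ points of $X$.
\begin{itemize}
\item Steps 1--2 only control generic points of the stratum $T_Z$. The count $t\le N-n$ comes from dimension growth, which says nothing about special points.
\item The doubling trick you mention breaks the bound.
\item The budget $\lceil k/n\rceil+(N-n)\le N$ holds only when $k\le n^2$. For larger $k$ you offer no argument.
\item You use the ``lower semicontinuity for curves'' as a black box. You do not verify its hypothesis for your linear sections, and you do not use its conclusion correctly. If you had a curve $C\subset X$ with $C\cap\Delta_{N-2}=\emptyset$ and $\Hrk_C(p)\le N$ for all $p$, then $\Hrk_X(p)\le \Hrk_C(p)\le N$ immediately. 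Steps 1--2 and the budget balancing would then be unnecessary.
\end{itemize}

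\textbf{The missing argument is the curve reduction together with the reason the curve bound holds.}
\begin{enumerate}
\item Let $C=X\cap L$ with $L$ a general linear space of codimension $n-1$. By Bertini, $C$ is an irreducible curve, and $\Hrk_X\le \Hrk_C$ since $C\subset X$.
\item $C$ avoids $\Delta_{N-2}$. If a general $L$ met $X\cap H_i\cap H_j$, then $\dim(X\cap H_i\cap H_j)\ge n-1$. So this set meets $n-1$ further coordinate hyperplanes, giving a point of $X$ with at least $n+1$ zero coordinates, i.e.\ a point of $X\cap\Delta_{N-n-1}$. This contradicts the hypothesis.
\item Every point of $C$ therefore has at most one zero coordinate. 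For $r\le N$, a product of $r$ points of $C$ has at most $N$ zeros and is never the zero vector. Hence the Hadamard map $C^{\times r}\to\bbP^N$ is a morphism, its image is closed, and $C^{\star r}$ consists of honest products of $r$ points. This closedness is what replaces your Step 3.
\item After translating so that $(1:\cdots:1)\in C$, one has $C^{\star(r-1)}\subset C^{\star r}$ and $\dim C^{\star r}=r$ for $r\le N$. If the dimension stalled, irreducibility would force the powers to stabilise. That contradicts finiteness of the generic rank, since $C$ is strongly concise.
\item Hence $\bigcup_{r\le N}C^{\star r}=\bbP^N$, and every point is a product of at most $N$ points of $C\subset X$.
\end{enumerate}

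\textbf{Why the reduction to a curve is essential.} For $X$ itself, images are guaranteed closed only in the range $rn\le N$. Its Hadamard powers may grow by just one dimension per step, so you cannot expect to reach $\bbP^N$ inside that range. For a curve the two rates match: growth is one dimension per factor, and each factor adds at most one zero.
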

The latter result is a corollary of a result on the lower semi-continuity of the Hadamard rank with respect to algebraic curves that do not contain points with at least two coordinates equal to zero (\Cref{thm:curves_finite}): this generalizes an analogous result for straight lines from \cite{bocci2016hadamard}. In \Cref{example:border_smallerthan_rank}, we show that the condition about the containment of points with two zero-coordinates is needed: indeed, we provide an example of a conic in $\bbP^2$ containing a coordinate point and for which the Hadamard rank is not lower semi-continuous. Such example justifies the definition of \textit{Hadamard border ranks}, see \Cref{def:hadamard_borderrank}.

\subsection*{Acknowledgments} All authors are members of INdAM-GNSAGA. AO has been partially supported by the Italian Ministry of University and Research in the framework of the Call for Proposals for scrolling of final rankings of the PRIN 2022 call - Protocol no. 2022NBN7TL (“Applied Algebraic Geometry of Tensors”). 

\section{Hadamard products of algebraic varieties}
\label{sec:definitions}

We first recall the basic definitions on Hadamard products of algebraic varieties. For a general reference, we refer to the recent book \cite{bocci2024hadamard}.

The Hadamard product of points defined in \Cref{def:hadamard_product} induces a rational map $h\colon \bbP V \times \bbP V \dashrightarrow \bbP V$. This can be regarded as the composition of the Segre embedding $\bbP V \times \bbP V \hookrightarrow \bbP (V\otimes V)$ with the linear projection onto the diagonal coordinates. This allows us to define the Hadamard product of algebraic varieties.

\begin{definition}
    Given two algebraic varieties $X,Y \subset \bbP V$, their \emph{Hadamard product} is the Zariski closure of the image of $X \times Y$ through the Hadamard product map, i.e.,
    \begin{equation}
    \label{def:HadamardProduct}
        X \star Y := \overline{h(X \times Y)} = \overline{\{p \star q ~|~ p \in X, \ q \in Y, \ p \star q \text{ exists}\}.}
    \end{equation}
    Given an algebraic variety $X \subset \bbP V$, we define its \emph{Hadamard powers} iteratively as:
    \[
        X^{\star 0} := (1:\cdots:1) \quad \text{ and } \quad X^{\star s} := X \star X^{\star (s-1)}.
    \]
\end{definition}

\begin{remark}
\label{rmk:hadamardirreducible}
    If $X, Y \subset \bbP V$ are irreducible, then $X \star Y \subset \bbP V$ is irreducible, since it is (the closure of) the image of a rational map.
\end{remark}

The notion of Hadamard rank with respect to an algebraic variety $X \subset \bbP V$ is strictly related to the Hadamard powers of $X$. If we look at the variety of points with bounded Hadamard-$X$-rank
\[
    \eta_m(X) = \overline{\{p \in \bbP V \mid \Hrk_X(p)\leq m\}},
\]
then, 
\[
    \eta_m(X) = X \cup X^{\star 2} \cup \cdots \cup X^{\star m}.
\]
Note that if $(1:\cdots:1) \in X$, then $X^{\star i} \subset X^{\star (i+1)}$ for any $i$, and, in particular, $\eta_m(X)=X^{\star m}$. Anyway, we always have that $\dim X^{\star i} \leq \dim X^{\star (i+1)}$ and thus $\dim \eta_m(X) = \dim X^{\star m}$. This allows us to interpret the \emph{generic Hadamard-$X$-rank}, which is defined as
\[
    \Hrk_X^\circ = \min\{r \mid \text{there exists a Zariski dense $U\subset X$ such that }\Hrk_X(p) = r \text{ for all } p \in U\},
\]
in terms of Hadamard powers of $X$. Indeed, the generic point $p \in \bbP V$ has Hadamard rank $m$ with respect to $X$ if and only if $m$ is the minimum integer $k$ for which $X^{\star k}$ fills the ambient space, namely, 
    \[
        \Hrk_X^\circ = \min\{ m \ | \ \eta_m(X) = \bbP V\} = \min\{ m \ | \ X^{\star m} = \bbP V\},
    \]
    with the convention that $\Hrk_X^\circ = \infty$ if $X^{\star m}$ does not fill the ambient space for any $m$.

The generic Hadamard-$X$-rank is not always finite, as the following example shows.

\begin{example}[Conciseness]\label{rmk:concise}
    Let $H_i = \{x_i = 0\}$ be the $i$th coordinate hyperplane in $\bbP^N$, where $i = 0,\ldots, N$. Notice that if $X \subset H_i$ for some $i$, then $X^{\star m} \subset H_i^{\star m} = H_i$ for all $m\geq 1$, and in particular $\Hrk_X^\circ = \infty$. In other words, if $\Hrk_X^\circ < \infty$ then $X$ is \emph{concise}, i.e., $X$ is not contained in any coordinate hyperplane.
\end{example}
A more general obstruction to the finiteness of the generic Hadamard rank is being contained in an {\em Hadamard-idempotent variety}, i.e.\ a variety $Y \subset \bbP V$ such that $Y^{\star 2} = Y$.
\begin{example}
    An example of concise varieties that are Hadamard-idempotent are {\em embedded toric varieties}, i.e.\ varieties parametrized by monomial maps, or, equivalently, such that their defining ideal has a generating set made by polynomials of the form ${\bf x}^\alpha - {\bf x}^\beta$.\footnote{We use the compact notation ${\bf x}^\alpha = x_0^{\alpha_0} \cdots x_N^{\alpha_N}$ for ${\bf x} =(x_0,\ldots,x_N) \in \bbC^{N+1}$ and $\alpha = (\alpha_0,\ldots,\alpha_N) \in \bbZ_{\geq 0}^{N+1}$.}
\end{example}

However, that is not the only case. Indeed, it is possible to have that $\Hrk_X^\circ = \infty$ even if the Hadamard powers of $X$ are not all contained in some specific proper variety, as we see in the following example.

\begin{notation}
    In what follows, by a {\em binomial} we mean a polynomial of the form $a{\bf x}^{\alpha} + b {\bf x}^\beta$ for some $a, b \in \bbC \smallsetminus\{0\}$ and $\alpha, \beta \in \bbZ_{\geq 0}^{N+1}$. By a {\em binomial hypersurface} we mean an hypersurface defined by a binomial equation.
\end{notation}

\begin{example}
    Let $c \in \bbC$ be a non-zero complex number which is not a root of unity. For all integers $N\geq 2$ and $d \geq 1$, we consider the binomial hypersurface $X_{d,N} = \{  x_1^d - c x_0^{d-1}x_N = 0 \} \subset \bbP^N$. Then, $X_{d,N}^{\star m} = \{ x_1^d - c^m x_0^{d-1}x_N = 0 \}$, see \cite[Proposition 4.5]{bocci2022hadamard}. For all $m\geq 1$, let $p_m = (1:\cdots:1:c^{-m})$. Since $p_m = p_1^{\star m}$ and $p_1 \in X_{d,N}$, we deduce $\Hrk_{X_{d,N}} (p_m) \leq m$. Actually, equality holds: if, by contradiction, we had $\Hrk_{X_{d,N}} (p_m) = r < m$, then $p_m \in X_{d,N}^{\star r}$, and hence $1 - c^rc^{-m} = 0$. This is a contradiction since $r-m \neq 0$ and $c$ is not a root of unity. In particular, $\Hrk_{X_{d,N}}(p_m) = m$ for all $m$.
\end{example}

In \cite{hadamardranks}, it was shown using tropical geometry that concise irreducible Hadamard-idempotent varieties must be contained in a binomial hypersurface. As a corollary, the following characterization of the varieties $X \subset \bbP V$ for which the generic Hadamard-$X$-rank is finite was obtained.

\begin{theorem}[\cite{hadamardranks}]\label{thm:finiteness_generic}
    Let $X \subset \bbP V$ be an irreducible variety. The following are equivalent:
    \begin{enumerate}
        \item $\Hrk_X^\circ < \infty$, i.e.\ the Hadamard-$X$-rank of the generic point $p \in \bbP V$ is finite
        \item $X$ is concise and not contained in any binomial hypersurface
        \item the vanishing ideal of $X$ does not contain any variable and any binomial.
    \end{enumerate}
\end{theorem}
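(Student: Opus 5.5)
For the equivalence (2)$\Leftrightarrow$(3) I would just unwind definitions. $X\subset H_i$ exactly when $x_i$ lies in the vanishing ideal $I(X)$. Likewise, $X$ lies in a binomial hypersurface exactly when some binomial lies in $I(X)$. So the real content is (1)$\Leftrightarrow$(2). Throughout I would work in the open torus $T=\{x_0\cdots x_N\neq 0\}\subset\bbP^N$. On $T$ the Hadamard product is the group law of the algebraic torus $T\cong(\bbC^*)^N$. When $X$ is irreducible and concise, $X^\circ:=X\cap T$ is a dense open subset of $X$.

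\textbf{(1)$\Rightarrow$(2).} Conciseness follows from \Cref{rmk:concise}. Now suppose $X\subset\{a{\bf x}^\alpha+b{\bf x}^\beta=0\}$.
\begin{itemize}
\item If $\alpha=\beta$, the binomial is $(a+b){\bf x}^\alpha$. Either it is zero, which is excluded, or $X$ lies in a coordinate hyperplane by irreducibility, which contradicts conciseness.
\item Otherwise, homogeneity gives $|\alpha|=|\beta|$. Every $p\in X^\circ$ satisfies $p^\alpha=c\,p^\beta$ with $c=-b/a$.
\end{itemize}
By multiplicativity of monomials, any product $p_1\star\cdots\star p_m$ of points of $X^\circ$ satisfies ${\bf x}^\alpha=c^m{\bf x}^\beta$. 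Passing to closures, $X^{\star m}\subset\{{\bf x}^\alpha-c^m{\bf x}^\beta=0\}$, which is a proper hypersurface. Hence $\eta_m(X)\neq\bbP V$ for every $m$, and $\Hrk_X^\circ=\infty$.

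\textbf{(2)$\Rightarrow$(1).} This is the main step.
\begin{enumerate}
\item Since $\dim X^{\star m}$ is nondecreasing and bounded by $N$, choose $m$ with $\dim X^{\star(m+1)}=\dim X^{\star m}$, and set $Y=X^{\star m}$. Then $Y^\circ=Y\cap T$ is a nonempty irreducible closed subset of $T$; it is nonempty because $X^\circ\neq\emptyset$.
\item For every $x\in X^\circ$, translation $y\mapsto x\star y$ is an automorphism of $T$. So $x\star Y^\circ$ is irreducible, closed in $T$, of dimension $\dim Y$, and contained in $X^{\star(m+1)}\cap T$. The latter is irreducible (\Cref{rmk:hadamardirreducible}) of the same dimension. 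Hence $x\star Y^\circ=X^{\star(m+1)}\cap T$ for every $x\in X^\circ$.
\item Consequently $x^{-1}\star y\star Y^\circ=Y^\circ$ for all $x,y\in X^\circ$. Therefore the Zariski closure $G$ of the subgroup of $T$ generated by $\{x^{-1}\star y : x,y\in X^\circ\}$ stabilizes $Y^\circ$. Since stabilizers of closed sets are closed subgroups, the closure is harmless here.
\item By the standard result that an algebraic subgroup generated by irreducible subvarieties through the identity is connected, $G$ is a subtorus of $T$.
\item If $G\neq T$, then $G$ is cut out by character equations ${\bf x}^\alpha={\bf x}^\beta$ with $\alpha\neq\beta$. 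Fixing $x_0\in X^\circ$, we get $X^\circ\subset x_0\star G$. Thus $X$ satisfies ${\bf x}^\alpha=(x_0^\alpha/x_0^\beta)\,{\bf x}^\beta$, i.e.\ $X$ lies in a binomial hypersurface, contradicting (2).
\item So $G=T$, and $Y^\circ$ is a nonempty $T$-stable closed subset of $T$. Hence $Y^\circ=T$ and $X^{\star m}=\bbP V$.
\end{enumerate}
By Chevalley's theorem, the image of $X^{\times m}$ under the iterated Hadamard map contains a dense open subset of $\bbP V$. Therefore the generic Hadamard-$X$-rank is at most $m$.

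\textbf{Main obstacle.} I expect the difficulty to lie in the group-theoretic part, steps 3--5. One must make sure the stabilizer argument is carried out inside $T$, where closures and translations behave well, rather than in $\bbP V$. One must also check that a proper algebraic subgroup containing all the quotients $x^{-1}\star y$ really forces a nontrivial binomial relation on $X$. This uses the fact that proper closed connected subgroups of a torus are annihilated by a nontrivial character $\chi(x)=x^{\alpha-\beta}$, with $\alpha,\beta\in\bbZ_{\ge0}^{N+1}$ and $|\alpha|=|\beta|$.
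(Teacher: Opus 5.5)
Your proof is correct, but it takes a different route from the one the paper relies on. The paper does not reprove this statement. It quotes it from \cite{hadamardranks}, where the key input is a tropical-geometry result: concise irreducible Hadamard-idempotent varieties lie in a binomial hypersurface. The theorem is then a corollary, obtained by applying that result to the stable Hadamard power $Y=Y^{\star 2}$ of a suitable translate of $X$.

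You replace the tropical step with the group structure of the torus $T$. Stagnation of $\dim X^{\star m}$ forces every quotient $x^{-1}\star y$, with $x,y\in X\cap T$, to stabilize $X^{\star m}\cap T$. A proper closed subgroup of $T$ is annihilated by a nontrivial character, and that character is exactly a binomial equation satisfied by $X$ itself.

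What your route buys:
\begin{itemize}
\item It is self-contained, using only standard facts on diagonalizable groups.
\item It needs no normalization $(1:\cdots:1)\in X$ to make the powers nested.
\item It shows directly that under (2), $\dim X^{\star(m+1)}=\dim X^{\star m}$ already forces $X^{\star m}=\bbP V$. Hence $\Hrk_X^\circ\le N-\dim X+1$. This is the same mechanism as in the proof of \Cref{thm:curves_finite}, but derived from (2) rather than from finiteness.
\end{itemize}
The tropical route, in exchange, gives a structural statement about Hadamard-idempotent varieties that is of independent interest.

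Some minor points:
\begin{itemize}
\item Connectedness of $G$ in step 4 is unnecessary. Every closed subgroup of a torus is an intersection of kernels of characters, so any proper closed $G$ already yields the binomial.
\item In (2)$\Rightarrow$(3) you should add one line: a non-homogeneous binomial in the homogeneous prime ideal $I(X)$ forces a monomial, hence a variable, into $I(X)$.
\item The final Chevalley step needs that products of points of $X\cap T$ are dense in $X^{\star m}$. This follows by induction from the iterative definition. Alternatively, quote $\Hrk_X^\circ=\min\{m \mid X^{\star m}=\bbP V\}$ directly.
\end{itemize}
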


The goal of the next section is to prove a classification of the irreducible varieties for which the Hadamard-$X$-rank is finite for \textit{every} point, i.e.\ not only for the general one.

\section{Finiteness of the maximum rank}

Given an irreducible variety $X \subset \bbP V$, we denote the \emph{maximum Hadamard-$X$-rank} as
\[
    \Hrk^{\max}_X = \min\{r \mid \forall p\in \bbP V,\ \Hrk_X(p) \leq r\}
\]
with the convention that $\Hrk^{\max}_X = \infty$ if there exists at least a point $p \in \bbP V$ for which $\Hrk_X(p) = \infty$.

Note that even if we require the finiteness of the generic Hadamard-$X$-rank, it is not guaranteed that the maximum Hadamard-$X$-rank is finite, as shown in the following example. 
\begin{example}[\cite{hadamardranks}]\label{example:maximum_nonfinite}
    Consider the quadric curve $Q = \{ x_0x_1 + x_0x_2 + x_1x_2 = 0 \} \subset \bbP^2$. Then, $Q^{\star 2} = \bbP^2$ and so $\Hrk_Q^\circ = 2$. However, every point $(a:b:0)$ such that $ab \neq 0$ cannot be written as a product of points on $Q$. Hence, $\Hrk_Q((a:b:0)) = \infty$ for all $a,b \in \bbC \smallsetminus \{0\}$.
\end{example}
However, in the previous example, the problem arises for points with some coordinate equal to zero. This motivated to look at points {\em with all non-zero coordinates}, and the following bound, inspired by the analogous result \cite[Theorem 1]{blekherman2015maximum} for additive $X$-ranks, was obtained.

\begin{theorem}[\cite{hadamardranks}]\label{thm:finitess_torus}
    Let $X \subset \bbP^N$ be a concise irreducible variety not contained in any binomial hypersurface. Then, $\Hrk_X(p) \leq 2 \Hrk_X^\circ$ for all $p =(p_0:\cdots:p_N) \in \bbP^N$ such that $\prod_i p_i \neq 0$.
\end{theorem}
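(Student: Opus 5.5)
The statement to prove is \Cref{thm:finitess_torus}: for every $p$ in the torus $T=\{\prod_i p_i\neq 0\}$ we have $\Hrk_X(p)\leq 2\Hrk_X^\circ$. I would adapt the Blekherman--Teitler argument for additive ranks, replacing addition by the group law of the torus $T\cong(\bbC^*)^{N+1}/\bbC^*$. Coordinate-wise multiplication is everywhere defined on $T$, and $T$ acts on $\bbP^N$ by automorphisms.

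First, set $m=\Hrk_X^\circ$, which is finite by \Cref{thm:finiteness_generic}. By the definition of the generic rank there is a dense open subset $U\subset\bbP^N$ with $\Hrk_X(q)=m$ for all $q\in U$. Shrinking, I may assume $U\subset T$. The key point is that every $q\in U$ is an honest product $q=x_1\star\cdots\star x_m$ with $x_i\in X$, not only a limit of such products. Since $q\in T$, each $x_i$ must also lie in $T$. So $U$ is contained in the image of the multiplication map $(X\cap T)^m\to T$.

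Second, fix $p\in T$ and consider the map $\iota\colon T\to T$, $q\mapsto p\star q^{-1}$, where $q^{-1}=(q_0^{-1}:\cdots:q_N^{-1})$. This is an isomorphism of varieties, so $\iota(U)$ is a dense open subset of $T$, and hence of $\bbP^N$. Two dense open subsets of the irreducible space $\bbP^N$ meet, so there is $q\in U\cap\iota(U)$. Then both $q$ and $q':=p\star q^{-1}$ lie in $U$. Therefore $p=q\star q'$, and each of $q$ and $q'$ is a product of $m$ points of $X$. Hence $p$ is a product of $2m$ points of $X$, which gives $\Hrk_X(p)\leq 2m$.

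The main obstacle is the first step: I must make sure that "generic rank $m$" means that generic points are actual products and not merely points of the closure $X^{\star m}$. This follows from Chevalley's theorem. The image of the multiplication map $(X\cap T)^m\to T$ is constructible, and it is dense in $X^{\star m}=\bbP^N$, so it contains a dense open subset. I would use this open subset as $U$, and the rest of the argument is formal.
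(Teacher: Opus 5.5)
Your proof is correct and is essentially the argument behind this statement. The paper does not reprove it but quotes it from \cite{hadamardranks} as the multiplicative analogue of \cite[Theorem 1]{blekherman2015maximum}, and your proof is exactly that analogue: you use the group law of the torus $T$ to intersect a dense open set $U$ of honest products of $m$ points with its translate $\iota(U)=p\star U^{-1}$. The only step worth making explicit is the identification $X^{\star m}=\overline{\mu((X\cap T)^m)}$, which uses conciseness so that $X\cap T$ is dense in $X$; this is what lets Chevalley produce $U$.
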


If we look again at \Cref{example:maximum_nonfinite}, we realize that the intersection of $Q$ with any coordinate line $H_0, H_1, H_2 \subset \bbP^2$ yields only coordinate points, i.e.\ points having always at least two coordinates equal to zero. Since we want to avoid such a situation, we give the following definition.

\begin{definition}
\label{def:strongly_concise}
    A variety $X \subset \bbP^N$ is called {\em strongly concise} if, for each $i = 0,\ldots,N$, the intersection $X \cap H_i$ is not contained in a union of coordinate hyperplanes different from $H_i$, i.e., $(X\cap H_i) \not\subset \bigcup_{j\neq i}H_j$. 
\end{definition}

\begin{remark}
\label{rmk:strong->generic_finite}
    Clearly, if $X$ is strongly concise then $X$ is concise, as defined in  \Cref{rmk:concise}. Moreover, notice that binomial hypersurfaces are not strongly concise: in particular, if $X$ is strongly concise, then the generic Hadamard-$X$-rank is finite by \Cref{thm:finiteness_generic}.
\end{remark}

\begin{remark}
\label{rem:strongly_concise_ideal}
    From an algebraic point of view, saying that $X$ is strongly concise is equivalent to say that the vanishing (radical) ideal of $X$, denoted by $I(X)$, does not contain a polynomial of the form $x_iF + {\bf x}^\alpha$ where ${\bf x}^{\alpha} \in \bbC[x_0,\ldots,\widehat{x_i},\ldots,x_N]$ is a monomial not involving $x_i$. Indeed, if such a polynomial is identically zero on $X$, then $(X \cap H_i) \subset \{{\bf x} \mid {\bf x}^\alpha = 0\} \subset \bigcup_{j\neq i}H_j$. Vice versa, assume that $X$ is not strongly concise. Then, for some $i \in \{0,\ldots,N\}$, $(X \cap H_i) \subset \bigcup_{j \neq i} H_j$, and so $I\Big(\bigcup_{j \neq i} H_j\Big) \subset I(X \cap H_i)$. The ideal on the left is generated by $\prod_{j\neq i} x_j$, while the ideal on the right is $\sqrt{I(X) + (x_i)}$. By the inclusion, there exists $r \geq 1$ such that $\prod_{j\neq i}x_j^r \in I(X) + (x_i)$, from which $\prod_{j\neq i}x_j^r = G - x_iF$ for some $G \in I(X)$. Hence, the ideal $I(X)$ contains the polynomial $G = \prod_{j\neq i}x_j^r + x_i F$.
\end{remark}

We now prove \Cref{thm:maximum_finite} by separately proving the two implications.

The necessary condition is a generalization of \Cref{thm:finitess_torus}.
\begin{proposition}
\label{prop:bound_zeroes}
    Let $X \subset \bbP^N$ be a strongly concise irreducible variety. Then, for all $p \in \bbP^N$, 
    \[
        \Hrk_X(p) \leq 2\Hrk_X^\circ + z(p),
    \]
    where $z(p)$ denotes the number of zero coordinates of $p$. In particular, $\Hrk_X^{\max}<\infty$.
\end{proposition}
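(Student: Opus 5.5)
Let $p\in\bbP^N$ and let $Z=\{i \mid p_i=0\}$, so $z(p)=|Z|$. The plan is to kill the coordinates in $Z$ one at a time using points of $X$ that each have exactly one zero coordinate, and to produce the remaining nonzero coordinates with \Cref{thm:finitess_torus}. By \Cref{rmk:strong->generic_finite}, $X$ is concise, not contained in a binomial hypersurface, and $\Hrk_X^\circ<\infty$. The case $Z=\{0,\ldots,N\}$ cannot occur for a projective point, so $Z$ is a proper subset.

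First, strong conciseness gives, for each $i\in Z$, a point $x^{(i)}\in X\cap H_i$ with $x^{(i)}_j\neq 0$ for all $j\neq i$. Indeed, $(X\cap H_i)\not\subset\bigcup_{j\neq i}H_j$ says exactly this. Next we form the Hadamard product $w=\star_{i\in Z}x^{(i)}$. Its coordinates are $w_j=\prod_{i\in Z}x^{(i)}_j$, which vanishes precisely for $j\in Z$. Since $Z$ is proper, $w$ exists as a projective point, and its zero pattern coincides with that of $p$.

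Now we define $q\in\bbP^N$ by
\[
q_j = p_j/w_j \text{ for } j\notin Z, \qquad q_j = 1 \text{ for } j\in Z.
\]
This point has all coordinates nonzero, and $q\star w=p$ because for $j\in Z$ both sides are zero. By \Cref{thm:finitess_torus}, $q=y_1\star\cdots\star y_m$ with $y_k\in X$ and $m\le 2\Hrk_X^\circ$. All partial products of the $y_k$ exist because their product $q$ has all coordinates nonzero, so every $y_k$ has all coordinates nonzero. Hence
\[
p = y_1\star\cdots\star y_m\star \textstyle\star_{i\in Z}x^{(i)},
\]
which gives $\Hrk_X(p)\le 2\Hrk_X^\circ+z(p)$. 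The edge case $Z=\emptyset$ is just \Cref{thm:finitess_torus} itself. The final claim follows since $z(p)\le N$, so $\Hrk_X^{\max}\le 2\Hrk_X^\circ+N<\infty$.

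The only delicate point is checking that every intermediate Hadamard product exists, that is, is not the zero vector. This holds because $Z$ is proper and each $x^{(i)}$ vanishes only in coordinate $i$. Everything else is a direct application of \Cref{thm:finitess_torus} together with the definition of strong conciseness.
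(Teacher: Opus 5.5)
Your proposal is correct and follows essentially the same route as the paper. Both arguments kill each zero coordinate of $p$ with a point of $X\cap H_i$ that is nonzero elsewhere, and write the remaining all-nonzero cofactor with at most $2\Hrk_X^\circ$ factors via \Cref{thm:finitess_torus}. Your explicit check that every intermediate Hadamard product exists is a small detail the paper leaves implicit.
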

\begin{proof}
    Let $p = (p_0 : \cdots :p_N) \in \bbP^N$ and write $z(p) = z$. Since $X$ is strongly concise, for all $i = 0,\ldots, N$, there exists $q_i \in X$ such that if $q_i = (q_{i0}:\cdots:q_{iN})$, then $q_{ii}= 0$ and $\prod_{j \neq i} q_{ij} \neq 0$. Let $i_1,\ldots,i_z$ be all the indices such that $p_{i_j} = 0$ and let $q = q_{i_1}\star \cdots \star q_{i_z}$. Since $q$ and $p$ have the same number of zero coordinates, there exists $p' \in \bbP^N$ with all non-zero coordinates such that $p = p' \star q$. Since $X$ is strongly concise, $\Hrk_X^\circ <\infty$ (\Cref{rmk:strong->generic_finite}) and by \Cref{thm:finitess_torus} we know that $\Hrk_X(p') \leq 2 \Hrk_X^\circ$. At the same time, by construction, $\Hrk_X(q) \leq z$. Hence, $\Hrk_X(p) \leq \Hrk_X(p') + \Hrk_X(q) \leq 2 \Hrk_X^\circ + z$ and we conclude. 
\end{proof}

Now, we prove the opposite direction which concludes the proof of \Cref{thm:maximum_finite}.

\begin{proposition}\label{prop:maximum=>strongly_concise}
    Let $X \subset \bbP^N$ be a variety and assume that the maximum Hadamard-$X$-rank is finite. Then, $X$ is strongly concise.
\end{proposition}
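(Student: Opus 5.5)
We argue by contraposition: assuming $X$ is not strongly concise, we exhibit a point of infinite Hadamard-$X$-rank. So suppose there is an index $i$ with $(X\cap H_i)\subset\bigcup_{j\neq i}H_j$. Let $p$ be the point with $p_i=0$ and $p_j=1$ for all $j\neq i$, i.e.\ $p=(1:\cdots:1:0:1:\cdots:1)$ with the zero in position $i$. We claim that $\Hrk_X(p)=\infty$, which contradicts the finiteness of $\Hrk^{\max}_X$.

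Suppose, for contradiction, that $p=p_1\star\cdots\star p_m$ with $p_k\in X$. Since the product is defined and its $j$th coordinate is $\prod_k (p_k)_j$, every $j\neq i$ satisfies $(p_k)_j\neq 0$ for all $k$. On the other hand, $\prod_k (p_k)_i=0$, so some $p_k$ has $(p_k)_i=0$. This $p_k$ lies in $X\cap H_i$ and has all other coordinates non-zero, so $p_k\notin\bigcup_{j\neq i}H_j$. That contradicts the assumption. Hence no decomposition exists, and $\Hrk_X(p)=\infty$.

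The case where $X$ is not even concise, say $X\subset H_i$, is already covered. In that case $X\cap H_i=X$, and the argument above still applies: every point of $X$ has $i$th coordinate zero, so every product of points of $X$ does too. Moreover, the only way $X\cap H_i$ can be contained in $\bigcup_{j\neq i}H_j$ is exactly the situation treated above, so no separate case is needed. Note also that the argument uses neither irreducibility nor any earlier result: it only uses the coordinate-wise nature of $\star$ and the definition of strong conciseness. I do not expect a real obstacle here. The only point needing care is the bookkeeping that a defined Hadamard product with all coordinates $j\neq i$ non-zero forces every factor to be non-zero in those coordinates.
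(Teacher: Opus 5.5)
Your proof is correct and is essentially the paper's argument: you take $p$ with $p_i=0$ and all other coordinates non-zero, and note that a factor vanishing in coordinate $i$ would lie in $X\cap H_i$ with all other coordinates non-zero. Your closing paragraph on the non-concise case is unnecessary and slightly off: for $X=H_i$ one does not have $(X\cap H_i)\subset\bigcup_{j\neq i}H_j$; instead, a non-concise $X\subset H_i$ fails strong conciseness at any index $j\neq i$, which your main argument already covers.
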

\begin{proof}
    Suppose by contradiction that $X$ is not strongly concise. Then, there exists $i = 0,\ldots, N$ such that $(X \cap H_i) \subset \bigcup_{k\neq i} H_k$. In other words, for any point $y = (y_0 : \cdots :y_N) \in X$ with $y_i = 0$, we have $y_k = 0$ for some other $k \neq i$. Let $p = (p_0 : \cdots : p_N) \in \bbP^N$ be any point in the ambient space with $p_i = 0$ and $\prod_{j \neq i} p_j \neq 0$. Since the maximum Hadamard-$X$-rank is finite, $p = q_1\star \cdots \star q_m$ for some $q_1,\ldots,q_m \in X$. Since $p_i = 0$, then, for some $j = 1,\ldots, m$, the point $q_j = (q_{j0}: \cdots :q_{jN}) \in X$ satisfies $q_{ji} = 0$. Since $q_j \in X$ and $(X\cap H_i) \subset \bigcup_{k \neq i} H_k$, we have that $q_{jk} = 0$ for some $k \neq i$. It follows that $p_k = 0$ for some $k \neq i$, a contradiction.  
\end{proof}

\begin{corollary}
    For any $X \subset \bbP V$ irreducible variety, the following are equivalent:
    \begin{enumerate}
        \item $\Hrk_X^{\max}< \infty$, i.e.\ the Hadamard-$X$-rank is finite for any point $p \in \bbP V$
        \item $X$ is strongly concise
        \item the vanishing ideal of $X$ does not contain a polynomial of the form $x_i F+ {\bf x}^{\alpha}$, where the monomial ${\bf x}^\alpha \in \bbC[x_0,\ldots,\widehat{x_i},\ldots,x_N]$ does not involve the variable $x_i$.
    \end{enumerate}
\end{corollary}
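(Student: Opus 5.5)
The plan is to close the cycle of implications using results already in hand. The equivalence (1)$\Leftrightarrow$(2) is exactly \Cref{thm:maximum_finite}, and I would prove it by combining the two propositions just established. If $X$ is strongly concise, \Cref{prop:bound_zeroes} gives
\[
\Hrk_X(p)\le 2\Hrk_X^\circ+z(p)\le 2\Hrk_X^\circ+N
\]
for every $p$, since a projective point has at most $N$ zero coordinates. Hence $\Hrk_X^{\max}<\infty$. Conversely, if $\Hrk_X^{\max}<\infty$, then \Cref{prop:maximum=>strongly_concise} says directly that $X$ is strongly concise. That settles (1)$\Leftrightarrow$(2).

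For (2)$\Leftrightarrow$(3) I would invoke the argument of \Cref{rem:strongly_concise_ideal}, made explicit in both directions.

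First suppose $I(X)$ contains $x_iF+{\bf x}^\alpha$ with ${\bf x}^\alpha$ not involving $x_i$. Any point of $X\cap H_i$ then satisfies ${\bf x}^\alpha=0$, so it lies in some $H_j$ with $j\neq i$. Note that $\alpha\neq 0$, since otherwise the polynomial would be $x_iF+1$, and this cannot vanish on the nonempty set $X\cap H_i$ (nonempty when $\dim X\ge1$; the degenerate case of a point can be handled directly). So $X$ is not strongly concise.

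Conversely, suppose $(X\cap H_i)\subset\bigcup_{j\ne i}H_j$. The Nullstellensatz gives $\prod_{j\ne i}x_j^r\in I(X)+(x_i)$. Writing $\prod_{j\ne i}x_j^r=G-x_iF$ with $G\in I(X)$ produces the element $G=x_iF+\prod_{j\ne i}x_j^r$ of the required form.

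The only delicate point is the Nullstellensatz step. One must use that $I(X\cap H_i)=\sqrt{I(X)+(x_i)}$ in the projective setting, and handle the possibility that $X\cap H_i$ is empty. That cannot happen for a positive-dimensional $X$, and for the trivial case of a point it can be checked by hand. Otherwise the corollary is a formal assembly of the preceding results.
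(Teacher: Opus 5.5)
Your proposal is correct and matches the paper's proof: the corollary is assembled from \Cref{prop:bound_zeroes} and \Cref{prop:maximum=>strongly_concise} for (1)$\Leftrightarrow$(2), and from the Nullstellensatz argument of \Cref{rem:strongly_concise_ideal} for (2)$\Leftrightarrow$(3). Your extra edge-case checks are harmless, and the aside that $\alpha\neq 0$ is not even needed: if $\alpha=0$, the inclusion $X\cap H_i\subset\{{\bf x}^\alpha=0\}=\emptyset$ still shows that $X$ is not strongly concise.
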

\begin{proof}
    The proof follows from \Cref{rem:strongly_concise_ideal}, \Cref{prop:bound_zeroes,prop:maximum=>strongly_concise}.
\end{proof}

\subsection{Varieties of tensors}\label{ssec:tensors}

In what follows, we show that some families of varieties relevant for tensor decompositions are strongly concise. By \Cref{thm:maximum_finite}, this will imply that the Hadamard decompositions associated to them will always exist for any point in the ambient space.

Recall that the case of secant varieties of Segre-Veronese varieties was already considered in \cite{hadamardranks}.

\subsubsection{Grassmannians} We begin with skew-symmetric tensor decompositions.

\begin{lemma}
\label{lem:grassmannians}
    The Grassmannian of $k$-dimensional linear subsets of $\bbC^n$ is strongly concise in its Plücker embedding $\bbG(k,n) \hookrightarrow \bbP \bigwedge^{k} \bbC^{n}$.
\end{lemma}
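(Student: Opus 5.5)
Our plan is to verify \Cref{def:strongly_concise} directly. Coordinates on $\bbP\bigwedge^k\bbC^n$ are the Plücker coordinates $x_I$, indexed by $k$-subsets $I\subset\{1,\ldots,n\}$. For a $k$-dimensional subspace $W$ spanned by the rows of a $k\times n$ matrix $A$, the coordinate $x_I(W)$ is the maximal minor $\det A_I$ on the columns in $I$. So for each fixed $I$ we must exhibit a $k\times n$ matrix $A$ of full rank with $\det A_I=0$ and $\det A_J\neq 0$ for every other $k$-subset $J\neq I$. By symmetry of the index set (permuting columns permutes coordinates), we may assume $I=\{1,\ldots,k\}$. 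The cases $k=0$ and $k=n$ are trivial or degenerate ($N=0$), so we assume $1\le k\le n-1$.

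The construction we intend is a genericity argument. Consider the affine space of $k\times n$ matrices whose first $k$ columns are linearly dependent. For instance, fix the first column to be $0$ if $k=1$, and otherwise force the $k$-th column to equal the $(k-1)$-th plus a generic combination. More cleanly, consider the irreducible hypersurface $D=\{\det A_I=0\}$ in $\bbC^{k\times n}$; it is irreducible because the determinant is an irreducible polynomial. For each $J\neq I$, the minor $\det A_J$ does not vanish identically on $D$. Then, since $D$ is irreducible, a general point of $D$ avoids the finitely many proper closed subsets $D\cap\{\det A_J=0\}$. Such a point also has full rank, because some $\det A_J\neq0$. Hence it gives a point of $\bbG(k,n)\cap H_I$ outside $\bigcup_{J\neq I}H_J$.

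The main step is showing that $\det A_J\not\equiv 0$ on $D$ for $J\neq I$. We can do this with an explicit matrix. Pick $j\in J\setminus I$ and $i\in I\setminus J$. Take $A$ whose columns indexed by $J$ form the identity matrix (in some order), and set all other columns to zero. If $J$ is chosen so, $\det A_J=\pm1$. Meanwhile the column $i\notin J$ is zero, so $\det A_I=0$ and $A\in D$. This settles the step, and the lemma follows.

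Alternatively, one could write down a single explicit matrix, e.g.\ a Vandermonde-type matrix with columns $(1,t_c,\ldots,t_c^{k-1})$ where the columns in $I$ are perturbed to be dependent. But the genericity argument above avoids computation, so we would present that.
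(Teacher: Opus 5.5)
Your proof is correct, and it takes a different route from the paper's. Both arguments reduce the lemma to the same statement: there is a full-rank $k\times n$ matrix with exactly one vanishing maximal minor.

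The paper proves this by an explicit induction on $n$, working with the columns $v_1,\ldots,v_n\in\bbC^k$:
\begin{itemize}
\item $v_1,\ldots,v_k$ span a hyperplane $H$, and $v_{k+1}\notin H$;
\item each new $v_n$ is chosen off the finitely many hyperplanes spanned by $(k-1)$-subsets of the earlier vectors.
\end{itemize}

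You argue in one step instead. The hypersurface $D=\{\det A_I=0\}$ is irreducible because the generic determinant is an irreducible polynomial. Each other minor $\det A_J$ is nonzero at an explicit point of $D$: the identity on the columns of $J$ and zeros elsewhere, which has a zero column indexed by any $i\in I\smallsetminus J$. So a general point of $D$ lies outside the finitely many proper closed subsets $D\cap\{\det A_J=0\}$, and it has full rank because some $J\neq I$ exists when $k\le n-1$.

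Your route relies on the standard irreducibility of the determinant. In exchange it needs no induction and no general-position care in the base case; the paper's base case tacitly needs $v_1,\ldots,v_k$ in general position inside $H$ so that every $(k-1)$-subset is independent. Your argument also shows more: a general point of the divisor $\bbG(k,n)\cap H_I$, not just one constructed point, lies on no other coordinate hyperplane. The paper's construction, by contrast, is entirely elementary linear algebra. Your explicit exclusion of the degenerate cases $k\in\{0,n\}$ is more careful than the paper, which passes over them.

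Two cosmetic points. Delete the exploratory sentence about fixing the first or $k$-th column. The index $j\in J\smallsetminus I$ is never used, so drop it too.
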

\begin{proof}
    It is well known that a point in $\bbG(k,n)$ is determined by the equivalence class of a full rank $k\times n$ matrix. The Plücker embedding takes any matrix in the equivalence class into its $k \times k$ maximal minors. Hence, the strongly conciseness of $\bbG(k,n)$ in this embedding is equivalent to the following statement: 
    \begin{center}
        There exists a $k\times n$ matrix with exactly one $k\times k$ minor equal to zero.
    \end{center}
    We prove the statement in the following equivalent geometric reformulation.
    \begin{claim}
    \label{claim:grassmannian}
        For $k\leq n$, there exist $\{v_1,\ldots,v_n\} \subset \bbC^k$ such that $\{v_1,\ldots,v_k\}$ do not span $\bbC^k$, but any other subset $\{v_{i_1},\ldots,v_{i_k}\}$ spans $\bbC^k$.
    \end{claim}
    \begin{proof}[Proof of Claim]
        For all $k\geq 1$, we prove the theorem by induction on $n\geq k+1$. By construction, the points $v_1,\ldots, v_k \in \bbC^k$ span an hyperplane $H \cong \bbC^{k-1}$. In the base case $n = k+1$, we can just choose $v_{k+1}$ to be not in $H$. For the inductive step, we can choose $v_1,\ldots,v_{n-1} \in \bbC^k$ with the desired property. For all subsets $\{ i_1,\ldots, i_{k-1}\} \subset \{ 1,\ldots, n-1\}$, denote by $H_{i_1,\ldots,i_{k-1}} \subset \bbC^k$ the hyperplane spanned by $\{v_{i_1},\ldots, v_{i_{k-1}}\}$. Since $\bigcup_{i_1,\ldots,i_{k-1}} H_{i_1,\ldots,i_{k-1}} \neq \bbC^k$, we can choose $v_n \not \in H_{i_1,\ldots,i_{k-1}}$ for all $i_1,\ldots,i_{k-1}$. Then, $\{v_1,\ldots,v_{n-1},v_n\} \subset \bbC^k$ have the desired property.
    \end{proof}
    This concludes the proof of the lemma.
\end{proof}

We now turn to the case of symmetric tensors (or, homogeneous polynomials) with particular structures.

\subsubsection{Tangential variety to Veronese varieties}
\begin{definition}
    For all $d,n \geq 1$, the \emph{tangential variety} to the Veronese embedding of $\bbP^n$ in degree $d$ is:
    \[
        \tau_{d,n} = \{ [L^{d-1}M] : L, M \in (\bbC^{n+1})^\vee \} \subset \bbP\Sym^d\bbC^{n+1}.
    \]
    This is the Zariski closure of the union of all tangent spaces to the embedding $\bbP\bbC^{n+1} \hookrightarrow \bbP\Sym^d\bbC^{n+1}$.
\end{definition}

\begin{lemma}
\label{lem:tangential}
    The tangential variety $\tau_{d,n} \subset \bbP \Sym^d\bbC^{n+1}$ is strongly concise.
\end{lemma}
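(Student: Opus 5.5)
The plan is to work directly with the definition of strong conciseness (\Cref{def:strongly_concise}) in the monomial basis of $\Sym^d\bbC^{n+1}$. The coordinates of a point $[F]\in\bbP\Sym^d\bbC^{n+1}$ are the coefficients of $F$ with respect to the monomials ${\bf x}^a$, where $a\in\bbZ_{\geq 0}^{n+1}$ and $|a|=d$. So for each fixed exponent $a$ it suffices to exhibit linear forms $L,M$ such that $L^{d-1}M$ has zero coefficient on ${\bf x}^a$ and non-zero coefficient on every other monomial ${\bf x}^b$ with $|b|=d$.

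First, I fix $L=x_0+\cdots+x_n$ and leave $M=m_0x_0+\cdots+m_nx_n$ free. By the multinomial theorem,
\[
L^{d-1}M=\sum_i m_i\, x_i\sum_{|c|=d-1}\frac{(d-1)!}{c!}{\bf x}^c .
\]
Hence the coefficient of ${\bf x}^b$ equals $\sum_{i:\,b_i\geq 1} m_i\frac{(d-1)!}{(b-e_i)!}$. Using $\frac{(d-1)!}{(b-e_i)!}=\frac{(d-1)!}{b!}\,b_i$, which also vanishes when $b_i=0$, this coefficient is
\[
\frac{(d-1)!}{b!}\,\langle b,m\rangle,\qquad \langle b,m\rangle=\sum_i b_im_i .
\]
So, up to non-zero constants, the coordinates of $L^{d-1}M$ are the values of the linear functionals $m\mapsto\langle b,m\rangle$.

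It therefore remains to choose $m\in\bbC^{n+1}$ with $\langle a,m\rangle=0$ and $\langle b,m\rangle\neq 0$ for all $b\neq a$ with $|b|=d$. The hyperplane $W_a=\{\langle a,m\rangle=0\}$ is not contained in $W_b$ for $b\neq a$. Indeed, $a$ and $b$ are both non-zero, and they are not proportional since they have the same coordinate sum $d$ and are distinct. Thus each $W_a\cap W_b$ is a proper subspace of $W_a$. A finite union of proper subspaces cannot cover the irreducible space $W_a$, so a general $m\in W_a$ works. Note that $m\neq 0$ automatically, so $M\neq 0$ and $L^{d-1}M\neq 0$ gives a genuine point of $\tau_{d,n}$ lying in $H_a$ and in no other coordinate hyperplane. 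Since this works for every $a$, the variety $\tau_{d,n}$ is strongly concise.

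The only step that needs care is the coefficient computation. There, the observation that $L=\sum x_i$ collapses the coefficient to the linear form $\langle b,m\rangle$ is what makes everything work. The degenerate case $d=1$, where $\tau_{1,n}=\bbP^n$, is covered by the same argument (or is trivial).
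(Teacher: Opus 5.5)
Your proof is correct. It uses the same framework as the paper: for fixed $L$, each coefficient of $L^{d-1}M$ is a linear functional in the coefficients of $M$. One then chooses $M$ in the kernel of the functional attached to ${\bf x}^a$ but outside all the other kernels.

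The difference lies in how you get the key fact that these kernels are pairwise distinct hyperplanes.

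\textbf{The paper's route.} It keeps $L$ generic, so the functionals are given by vectors ${\bf v}_{\alpha,{\bf a}}$ whose entries are multinomial coefficients times ${\bf a}^{\alpha-e_k}$. It then argues that the set of bad ${\bf a}$ is Zariski closed. As written, that argument replaces proportionality of ${\bf v}_{\alpha,{\bf a}}$ and ${\bf v}_{\beta,{\bf a}}$ by equality. It also never exhibits a good ${\bf a}$, so the properness of the bad locus is only asserted.

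\textbf{Your route.} You specialize to $L=x_0+\cdots+x_n$. There the vector for ${\bf x}^b$ becomes $\frac{(d-1)!}{b!}\,b$, which is proportional to the exponent vector itself. Distinctness of the hyperplanes then reduces to the fact that distinct exponent vectors of the same total degree $d\geq 1$ are not proportional.

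\textbf{Comparison.} Your version is fully explicit and elementary. It also supplies exactly what the paper leaves implicit: a concrete witness that the bad locus is a proper closed subset. By openness, this recovers the paper's generic statement as well. The paper's formulation carries the slightly stronger information that a general $L$ works, but that is not needed for the lemma.
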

\begin{proof}
    Write $L = a_0x_0+\cdots + a_nx_n$ and $M = b_0x_0 + \cdots + b_nx_n$ for some $a_0,\ldots,a_n,b_0,\ldots,b_n \in \bbC$. Then, $L^{d-1} M = \sum_{\alpha} c_\alpha {\bf x}^{\alpha}$, where $\alpha = (\alpha_0,\ldots,\alpha_n)$ runs over all the multi-indices such that $\alpha_0+\cdots+\alpha_n = d$ and
    \[
        c_\alpha = \sum_{\substack{k = 0 \\ \alpha_k \neq 0}}^n b_k \binom{d-1}{\alpha_0 \cdots \alpha_{k} -1 \cdots \alpha_n}a_0^{\alpha_0} \cdots a_k^{\alpha_k -1} \cdots a_n^{\alpha_n}
    \]
    where the multinomial coefficient is $\binom{m}{m_0\cdots m_n} = \frac{m!}{m_0! \cdots m_n!}$. In other words, $c_\alpha$ is the scalar product between the vector ${\bf b} = (b_0,\ldots,b_n) \in \bbC^{n+1}$ and the vector ${\bf v}_{\alpha, {\bf a}} \in \bbC^{n+1}$ with components
    \[
        ({\bf v}_{\alpha,{\bf a}})_k = \begin{cases}
            \binom{d-1}{\alpha_0 \cdots \alpha_{k} -1 \cdots \alpha_n}a_0^{\alpha_0} \cdots a_k^{\alpha_k -1} \cdots a_n^{\alpha_n} \qquad & \text{if } \alpha_k >0, \\
            0 & \text{if } \alpha_k = 0,
        \end{cases}
    \]
    for $k = 0,\ldots, n$, where we use the notation ${\bf a} = (a_0,\ldots,a_n) \in \bbC^{n+1}$. For all $\alpha$, let $H_{\alpha, {\bf a}} \subset \bbC^{n+1}$ be the hyperplane orthogonal to ${\bf v}_{\alpha,{\bf a}} \in \bbC^{n+1}$.
    \begin{claim}
        If ${\bf a} \in \bbC^{n+1}$ is generic, then for all $\alpha$ the set $H_{\alpha,{\bf a}} \smallsetminus \bigcup_{\beta \neq a} H_{\beta, {\bf a}}$ is non-empty.
    \end{claim}
    Before proving the Claim, we see how it implies the statement. If the Claim holds, then we can choose ${\bf a} \in \bbC^{n+1}$ to be generic and, for all $\alpha$, we can choose ${\bf b}_{\alpha} \in H_{\alpha,{\bf a}} \smallsetminus \bigcup_{\beta \neq \alpha} H_{\beta, {\bf a}}$. Denote by $L, M_{\alpha} \in (\bbC^{n+1})^\vee$ the associated linear forms. Then, the coefficient of ${\bf x}^\alpha$ in $L^{d-1}M_\alpha$ is zero but all the others are non-zero.
    \begin{proof}[Proof of the Claim]
        We prove that the set of vectors ${\bf a} \in \bbC^{n+1}$ for which the Claim fails is Zariski closed. Suppose that the Claim is false. Then, there exists a multi-index $\alpha$ such that $H_{\alpha,{\bf a}} \subset \bigcup_{\beta \neq \alpha} H_{\beta, {\bf a}}$. Hence, $H_{\alpha,{\bf a}} = H_{\beta,{\bf a}}$ for some $\beta \neq \alpha$. In particular, ${\bf v}_{\alpha,{\bf a}} = {\bf v}_{\beta,{\bf a}}$, that is
        \[
            \binom{d-1}{\alpha_0 \cdots \alpha_{k} -1 \cdots \alpha_n}a_0^{\alpha_0} \cdots a_k^{\alpha_k -1} \cdots a_n^{\alpha_n} = \binom{d-1}{\beta_0 \cdots \beta_{k} -1 \cdots \beta_n}a_0^{\beta_0} \cdots a_k^{\beta_k -1} \cdots a_n^{\beta_n}
        \]
        for all $k \in \{0,\ldots,n\}$. Since $\alpha \neq \beta$ and $\alpha_0 + \cdots + \alpha_n = \beta_0 + \cdots + \beta_n = d \geq 1$, at least one of the above is a non-trivial polynomial equation vanishing on ${\bf a}$. Hence, the set of ${\bf a} \in \bbC^{n+1}$ for which the statement is false is cut out by some polynomial equations, therefore it is Zariski closed.
    \end{proof}
    This concludes the proof of the lemma.
\end{proof}

\subsubsection{Chow varieties and varieties of reducible forms}

The tangential variety to the Veronese embedding is contained, for example, in the Chow variety and the variety of reducible forms. This will be useful because of the following remark.

\begin{remark}
\label{rmk:containment}
    Let $X \subset Y$ be two algebraic varieties. If $X$ is strongly concise, then $Y$ is strongly concise. In particular, if $X$ is strongly concise, then all its secant varieties are strongly concise.
\end{remark}

Summing-up, we got the following. 

\begin{corollary}
    For each of these varieties and their secant varieties, the associated Hadamard rank is finite for any point in the ambient space.
    \begin{itemize}
        \item The Grassmannian  $\bbG(k,n) \hookrightarrow \bbP \bigwedge^{k} \bbC^{n}$ in its Plücker embedding.
        \item The tangential variety $\tau_{d,n} \subset \bbP \Sym^d\bbC^{n+1}$ to the Veronese embedding of $\bbP^n$ in degree $d$.
        \item the Chow variety
        $
            \mathbb{X}_{d,n} = \{ [L_1 \cdots L_d] : L_i \in (\bbC^{n+1})^\vee \} \subset \bbP \Sym^d\bbC^{n+1}.
        $
        \item the variety of reducible forms
        $
            \mathcal{R}_{{\bf d},n} = \{ [F_1 \cdots F_k] : F_i \in \Sym^{d_i} \bbC^{n+1}\} \subset \bbP \Sym^{d} \bbC^{n+1},
        $
        for any fixed ${\bf d} = (d_1,\ldots,d_k)$ with $d_1 + \cdots + d_k = d$.
    \end{itemize}
\end{corollary}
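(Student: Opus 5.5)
The proof will combine \Cref{thm:maximum_finite} with \Cref{rmk:containment}. By \Cref{thm:maximum_finite}, it suffices to show that each listed variety is strongly concise. By \Cref{rmk:containment}, strong conciseness then passes to all secant varieties, since each secant variety contains the original variety. So the argument reduces to checking strong conciseness of the four base varieties.

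The first two items are already done. For the Grassmannian in its Plücker embedding, strong conciseness is \Cref{lem:grassmannians}. For the tangential variety $\tau_{d,n}$, it is \Cref{lem:tangential}.

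For the remaining two, I will prove the containments
\[
\tau_{d,n}\subset \mathbb{X}_{d,n}\subset\mathcal{R}_{{\bf d},n}
\]
and then apply \Cref{rmk:containment} again. The two containments are handled as follows.
\begin{itemize}
\item The Chow variety contains every form $L^{d-1}M$: take $L_1=\cdots=L_{d-1}=L$ and $L_d=M$. Hence $\tau_{d,n}\subset\mathbb{X}_{d,n}$.
\item For the variety of reducible forms with ${\bf d}=(d_1,\ldots,d_k)$, group the linear factors of $L_1\cdots L_d$ into consecutive blocks of sizes $d_1,\ldots,d_k$. This writes any product of $d$ linear forms as $F_1\cdots F_k$ with $\deg F_i=d_i$, so $\mathbb{X}_{d,n}\subset\mathcal{R}_{{\bf d},n}$.
\end{itemize}
Both containments hold setwise on the parametrized points. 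They survive closure, since all these varieties are closed: they are images of projective varieties under morphisms.

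The only point needing care is that the coordinates used in $\bbP\Sym^d\bbC^{n+1}$ must be the same monomial coordinates in every case, so that "coordinate hyperplane" has the same meaning throughout. This holds because all three varieties sit in the same ambient space with the same monomial basis.

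I expect no real obstacle here. The substantive work is already contained in \Cref{lem:grassmannians} and \Cref{lem:tangential}.
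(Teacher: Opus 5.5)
Your proposal is correct and follows essentially the same route as the paper. Like the paper, you get the first two items from \Cref{thm:maximum_finite} with \Cref{lem:grassmannians} and \Cref{lem:tangential}, and the last two from the containment of $\tau_{d,n}$ together with \Cref{rmk:containment}; you chain this containment through $\mathbb{X}_{d,n}\subset\mathcal{R}_{{\bf d},n}$, whereas the paper simply asserts $\tau_{d,n}\subset\mathbb{X}_{d,n}$ and $\tau_{d,n}\subset\mathcal{R}_{{\bf d},n}$, and your explicit checks of the containments and of the common monomial coordinates only fill in details the paper leaves implicit.
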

\begin{proof}
    The first two points are implied by \Cref{thm:maximum_finite} together with \Cref{lem:grassmannians} and \Cref{lem:tangential} respectively. Since the tangential variety to the Veronese embedding of $\bbP^n$ in degree $d$ is contained in the Chow variety $\mathbb{X}_{d,n}$ and in the variety of reducible forms $\mathcal{R}_{{\bf d},n}$ with $d_1 + \cdots + d_k = d$, the last two points are a consequence of \Cref{thm:maximum_finite}, \Cref{lem:tangential} and  \Cref{rmk:containment}.
\end{proof}

\begin{remark}
    We can rephrase the above result in terms of decompositions of tensors.
    \begin{itemize}
        \item For any $r \geq 1$, every skew-symmetric tensor can be written as a coefficient-wise product of finitely many skew-symmetric tensors of skew-symmetric rank at most $r$.
        \item For any $r \geq 1$, every homogeneous polynomial of degree $d$ can be written as a coefficient-wise product of finitely many polynomials of the form $\sum_{i=1}^r L_i^{d-1}M_i$ with $\deg(L_i) = \deg(M_i) = 1$.
        \item For any $r \geq 1$, every homogeneous polynomial of degree $d$ can be written as a coefficient-wise product of finitely many polynomials of the form $\sum_{i=1}^r L_{i,1}\cdots L_{i,d}$ with $\deg(L_{i,j}) = 1$.
        \item For any $r \geq 1$ and fixed degrees $(d_1,\ldots,d_k)$ with $d_1 + \cdots + d_k = d$, every homogeneous polynomial of degree $d$ can be written as a coefficient-wise product of finitely many polynomials of the form $\sum_{i=1}^r F_{i,1}\cdots F_{i,k}$ with $\deg(F_{i,j}) = d_j$.
    \end{itemize}
\end{remark}

\section{Border Hadamard rank}
Contrarily to the classical notion of matrix rank, it is well known that the additive notion of $X$-rank fails often to be lower semi-continuous: a first example is given by Rational Normal Curves in $\bbP^3$. 

In \Cref{example:maximum_nonfinite} it is given a family of points contained in a Hadamard power $X^{\star m}$ but with infinite Hadamard-$X$-rank. That is not the only pathology against lower semi-continuity of Hadamard ranks: in the following example, we give a point contained in $X^{\star 2}$ but with Hadamard-$X$-rank equal to $3$. 

\begin{example}\label{example:border_smallerthan_rank}
    Consider the conic $C = \{x_0(x_1+x_2) + (x_1-x_2)^2 = 0\} \subset \bbP^2$. Note that $C$ is strongly concise, thus the Hadamard rank is finite for every point by \Cref{thm:maximum_finite}. Also, note that $C$ is not binomial, thus $C^{\star 2} = \bbP^2$ by \Cref{thm:finiteness_generic}. The conic $C$ is tangent to the line $H_0 = \{x_0 = 0\}$ at the point $(0:1:1)$. We compute the Hadamard-$C$-rank of all points in $H_0$.
    \begin{itemize}
        \item $(0:1:1)$ has Hadamard-$C$-rank equal to $1$. This is the only one in $H_0$ since $C \cap H_0 = \{(0:1:1)\}$.
        \item $(0:s:t)$ with $(s:t) \not\in \{(1:1),(1:-1)\}$ have Hadamard-$C$-rank equal to $2$. Indeed, we can write 
        \[
            (0:s:t) = (0:1:1) \star (-(s-t)^2 : s(s+t) : t(s+t)). 
        \]
        \item $(0:1:-1)$ has Hadamard-$C$-rank equal to $3$. It cannot be written as an Hadamard product of two points in $C$: indeed, such a decomposition needs to involve the point $(0:1:1)$ in order to get the zero in the first coordinate, but $C$ does not contain any point of the form $(u:1:-1)$. At the same time, we have 
        \[
            (0:1:-1) = (0:1:1) \star (-1:6:3) \star(9:1:-2).
        \]
        We have already observed that $C^{\star 2}$ fills the ambient space, thus $(0:1:-1)$ is a limit of a $1$-parameter family of points with Hadamard-$C$-rank equal to $2$. More explicitly, we can write
        \[
            (0:1:-1) = \lim_{\epsilon \rightarrow 0} ~(0:1+\epsilon:-1+\epsilon) = \lim_{\epsilon \rightarrow 0} ~(0:1:1) \star (-2:\epsilon(1+\epsilon):\epsilon(-1+\epsilon)).
        \]
    \end{itemize}
\end{example}

The latter example justifies the following definition.

\begin{definition}\label{def:hadamard_borderrank}
    Let $X \subset \bbP V$ be an algebraic variety and $p\in \bbP V$. The {\em border Hadamard-}$X${\em -rank} of $p$ is 
    \[
        \underline{\Hrk}_X(p) = \min \{ m \ | \ p \in X^{\star m} \}.
    \]
    with the convention that if $p \not \in X^{\star m}$ for all $m \geq 1$, then $\underline{\Hrk}_X(p) = \infty$.
\end{definition}

We now show that for varieties whose points do not contain too many zeroes, border rank equals rank in the low-rank regime. 

We recall the following notation from \cite{bocci2024hadamard}.

\begin{notation}
    For all $i = 0,\ldots, N$, we denote by $\Delta_i \subset \bbP^N$ the variety of points with at most $i+1$ non-zero coordinates. Hence
    \[
        \Delta_0 = \{ (1:0:\cdots:0), \ldots, (0:\cdots:0:1)\} \subset \Delta_1 \subset \cdots \subset \Delta_{N-1} = H_0 \cup \cdots \cup H_N \subset \Delta_N = \bbP^N.
    \]
    In particular, $\Delta_{N-k} \subset \bbP^N$ is the variety of points having at least $k$ coordinates equal to zero.
\end{notation}

\begin{remark}
    Let $X \subset \bbP^N$ be an irreducible variety of dimension $n$. Since a linear space of dimension at least $N - n$ intersects $X$, one has that $X \cap \Delta_i \neq \emptyset$ for all $i \geq N-n$. If $n>N/2$, then $X \cap \Delta_{N-n-1} \neq \emptyset$.
\end{remark}

\begin{remark}
\label{rmk:curve}
    A curve $C \subset \bbP^N$ with $C \cap \Delta_{N-2} = \emptyset$ is strongly concise because, by definition, $C \cap (H_i \cap H_j) = \emptyset$ for all $i\neq j$.
\end{remark}
We begin here to investigate the difference between rank and border rank.
\begin{lemma}
\label{lemma:borderequalsrank}
    Let $X \subset \bbP^N$ be an irreducible variety of dimension $n\leq N/2$ such that $X \cap \Delta_{N-n-1} = \emptyset$. Then, for all $r \geq 2$ such that $rn\leq N$, the points $p \in \eta_r(X) \smallsetminus \eta_{r-1}(X)$ satisfy $\underline{\Hrk}_X(p) = \Hrk_X(p) = r$.
\end{lemma}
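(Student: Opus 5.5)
The plan is to show that, under the hypotheses, the Hadamard product map is an honest morphism on $X^{\times s}$ for every $s \le r$. Then each Hadamard power $X^{\star s}$ will be the actual image $h(X^{\times s})$, with no closure needed, and the border rank will agree with the rank in this range.

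\textbf{Step 1: every point of $X$ has few zero coordinates.} The condition $X \cap \Delta_{N-n-1} = \emptyset$ says exactly this. Since $\Delta_{N-n-1}$ is the locus of points with at least $n+1$ zero coordinates, every $y \in X$ has at most $n$ zero coordinates.

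\textbf{Step 2: the Hadamard map is a morphism on $X^{\times s}$.} Fix $s \le r$ and consider the multi-homogeneous map
\[
h_s \colon X^{\times s} \dashrightarrow \bbP^N,\qquad (y_1,\ldots,y_s) \mapsto y_1 \star \cdots \star y_s .
\]
Its coordinates are the products $y_{1,k}\cdots y_{s,k}$, which are multihomogeneous of the same multidegree. So $h_s$ is regular wherever not all of them vanish simultaneously. They all vanish exactly when the zero sets $Z(y_j) \subset \{0,\ldots,N\}$ cover all $N+1$ indices. By Step 1, however,
\[
\bigl|\textstyle\bigcup_j Z(y_j)\bigr| \le sn \le rn \le N < N+1 ,
\]
so this never happens. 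Hence $h_s$ is a morphism on the whole projective variety $X^{\times s}$. This is the key point of the argument: the numerical hypothesis $rn \le N$ is precisely what rules out base points. One only needs to check that a multihomogeneous map without base points defines a morphism, which is standard.

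\textbf{Step 3: the Hadamard powers are genuine images.} Since $X^{\times s}$ is projective, the image of the morphism $h_s$ is closed. Therefore $X^{\star s} = \overline{h_s(X^{\times s})} = h_s(X^{\times s})$ for all $s \le r$: every point of $X^{\star s}$ is literally a product of $s$ points of $X$, so it has Hadamard-$X$-rank at most $s$. Consequently $\eta_r(X) = X \cup X^{\star 2} \cup \cdots \cup X^{\star r}$ consists of points of rank at most $r$.

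\textbf{Step 4: conclusion.} Let $p \in \eta_r(X) \smallsetminus \eta_{r-1}(X)$.
\begin{itemize}
\item If $\underline{\Hrk}_X(p) = m \le r-1$, then $p \in X^{\star m} \subset \eta_{r-1}(X)$, a contradiction. So $\underline{\Hrk}_X(p) \ge r$.
\item Since $p \in X^{\star s}$ for some $s \le r$, the previous point forces $s = r$. Hence $\underline{\Hrk}_X(p) = r$.
\item By Step 3, $p \in X^{\star r} = h_r(X^{\times r})$, so $\Hrk_X(p) \le r$.
\item Always $\Hrk_X(p) \ge \underline{\Hrk}_X(p) = r$.
\end{itemize}
This gives $\underline{\Hrk}_X(p) = \Hrk_X(p) = r$. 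I do not anticipate a real obstacle: the only delicate point is the base-locus count in Step 2.
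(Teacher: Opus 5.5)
Your proposal is correct and follows the same route as the paper: each point of $X$ has at most $n$ zero coordinates, and $rn\le N$ then makes the Hadamard map a morphism on $X^{\times r}$, so by closedness of the image $X^{\star r}$ is exactly the set of $r$-fold products. Your Step 4 spells out the bookkeeping on $\eta_r(X)\smallsetminus\eta_{r-1}(X)$ that the paper compresses into ``this implies the statement.''
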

\begin{proof}
    Since $X \cap \Delta_{N-n-1} = \emptyset$, the points of $X$ have at most $n$ coordinates equal to zero. A product of $r$ points in $X$ has at most $rn$ coordinates equal to zero: thus, by numerical assumption $rn \leq N$, the restriction $h\colon X^r \dashrightarrow X^{\star r}$ of the Hadamard map to $X^r$ is a morphism. Since morphisms of projective varieties are closed, we conclude that
    $X^{\star r} = \{ p_1 \star \cdots \star p_r : p_i \in X \}.$
    This implies the statement.
\end{proof}

\begin{theorem}
\label{thm:curves_finite}
    Let $C \subset \bbP^N$ be an irreducible curve such that $C \cap \Delta_{N-2} = \emptyset$. Then:
    \begin{enumerate}
        \item for all $1 \leq r \leq N$, $\eta_r(C)$ has dimension $r$;
        \item for all $p \in \bbP^N$, $\underline{\Hrk}_C(p) = \Hrk_C(p) \leq N$ and equality holds when $p$ is generic.
    \end{enumerate}
\end{theorem}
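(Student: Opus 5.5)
The plan is to apply \Cref{lemma:borderequalsrank} with $n=1$ and combine it with a dimension-growth argument for the Hadamard powers $C^{\star r}$. Since $C\cap\Delta_{N-2}=\emptyset$, every point of $C$ has at most one zero coordinate. Also $N\ge 2$ (for $N=1$ we would have $\Delta_{N-2}=\emptyset$ and the statement degenerates). By \Cref{rmk:curve}, $C$ is strongly concise. By \Cref{rmk:strong->generic_finite} and \Cref{thm:finiteness_generic}, $C$ is then concise and not contained in any binomial hypersurface.

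\emph{Step 1 (morphism/closedness).} Reuse the argument of \Cref{lemma:borderequalsrank}. For $r\le N$, a product of $r$ points of $C$ has at most $r\le N$ zero coordinates, so it is never the zero vector. Hence $h\colon C^r\to\bbP^N$ is a morphism, and its image is closed. Therefore
\[
C^{\star r}=\{p_1\star\cdots\star p_r : p_i\in C\}\quad\text{for all } r\le N,
\]
and this equality holds set-theoretically, including points with zero coordinates.

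\emph{Step 2 (dimension grows by exactly one until filling).} Clearly $\dim C^{\star(r+1)}\le \dim C^{\star r}+1$, because $C^{\star(r+1)}$ is the image of $C^{\star r}\times C$. I will show that if $Y:=C^{\star r}\neq\bbP^N$, then $\dim Y\star C>\dim Y$. Suppose instead that the dimensions are equal. By \Cref{rmk:hadamardirreducible}, $Y\star C$ is irreducible. For $c\in C\cap T$, where $T$ is the open torus, the translate $c\star Y$ is an irreducible closed subset of $Y\star C$ of the same dimension, hence $c\star Y=Y\star C$. Taking $c$ to be a product of torus points also gives $Y\star C\supseteq Y$-translates of the same kind, so in fact $c\star Y=Y$ for all $c\in C\cap T$ after normalizing by a fixed $c_0$; concretely, $Y$ is stable under the closed subgroup $S\subset T$ generated by the ratios $c/c_0$.

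The set $Y$ meets $T$, since it contains products of torus points. Thus $Y\cap T$ is a union of $c_0$-translates of $S$-cosets, so $\dim S\le\dim Y<N$ and $S$ is a proper closed subgroup of $T$. Every proper closed subgroup of $T$ lies in the kernel of some nontrivial character ${\bf x}^\alpha/{\bf x}^\beta$. Hence $C\cap T\subset c_0\cdot S$ lies in a hypersurface ${\bf x}^\alpha=\lambda{\bf x}^\beta$, and by irreducibility so does $C$. This is a binomial hypersurface, a contradiction. Starting from $\dim C=1$, induction gives $\dim C^{\star r}=r$ for $r\le N$. Since $\dim\eta_r(C)=\dim C^{\star r}$, this proves (1), and in particular $C^{\star N}=\bbP^N$. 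I expect this stabilizer step to be the main technical point, especially the normalization by $c_0$ and the passage from a proper subgroup to a binomial equation.

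\emph{Step 3 (conclusion of (2)).} Let $p\in\bbP^N$ and set $r=\underline{\Hrk}_C(p)$. Since $C^{\star N}=\bbP^N$, we have $r\le N$. By Step 1, $p\in C^{\star r}$ is an actual product of $r$ points of $C$, so $\Hrk_C(p)\le r$. The reverse inequality $\underline{\Hrk}_C\le\Hrk_C$ always holds. Hence $\underline{\Hrk}_C(p)=\Hrk_C(p)\le N$. Finally, $\eta_{N-1}(C)$ has dimension $N-1$ by (1), so a generic $p$ lies outside it and has rank exactly $N$.
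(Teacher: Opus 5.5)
Your proof is correct. Part (2) follows the same line as the paper: for $r\le N$ the Hadamard map on $C^r$ is a morphism, so $C^{\star r}$ consists of actual products, which is \Cref{lemma:borderequalsrank} with $n=1$. For part (1), however, you take a genuinely different route.

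The paper translates $C$ so that $(1:\cdots:1)\in C$, which makes the powers nested, $C^{\star(s-1)}\subseteq C^{\star s}$. If the dimension stalls at some $s\le N$, irreducibility gives $C^{\star s}=C^{\star(s-1)}$ and the powers stabilize forever. This contradicts $\Hrk^\circ_C<\infty$, which the paper imports from \Cref{thm:finiteness_generic}, the tropical result of \cite{hadamardranks}.

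You instead prove the strict growth $\dim(C\star Y)>\dim Y$ for $Y=C^{\star r}\neq\bbP^N$ directly. Equality of dimensions forces $c\star Y=C\star Y$ for all $c\in C\cap T$. Hence $Y$ is stable under the closed subgroup $S\subset T$ generated by the ratios $c\star c_0^{\star(-1)}$. Since $S$ acts freely on $Y\cap T\neq\emptyset$, you get $\dim S\le\dim Y<N$. So $S$ lies in the kernel of a nontrivial character, and $C$ lies in a binomial hypersurface.

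What your route buys: it needs neither the normalization nor \cite[Lemma~3.13]{hadamardranks}. It uses only the easy fact that a strongly concise variety is not contained in a binomial hypersurface, plus the standard description of closed subgroups of a torus. The same argument also works verbatim for any irreducible $X$ meeting the torus and not contained in a binomial hypersurface. It gives $\dim X^{\star m}\ge\min(\dim X+m-1,N)$, hence $\Hrk^\circ_X\le N-\dim X+1$. In other words, it reproves the needed direction of \Cref{thm:finiteness_generic} with an explicit bound. The paper's proof is shorter precisely because it outsources this step.

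The only blemish is expository. The sentence beginning ``Taking $c$ to be a product of torus points'' is garbled and should simply say: $c\star Y=C\star Y=c_0\star Y$, hence $(c\star c_0^{\star(-1)})\star Y=Y$. Combine this with the fact that the stabilizer in $T$ of a closed subset is Zariski closed.
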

\begin{proof}
    Since (1) implies that $\eta_N(C) = \bbP^N$, (2) follows from (1) and \Cref{lemma:borderequalsrank}. We prove (1).

    By assumption, $C$ contains a point $p$ with all non-zero coordinates, thus, without loss of generalities, we can assume that $(1:\cdots:1) \in C$. Indeed, let $q = p^{\star (-1)}$ and consider $q \star X$. Clearly, $(1:\cdots:1) = q \star p \in q \star X$. Moreover, by \cite[Lemma~3.13]{hadamardranks}, $(q \star X)^{\star r} = q^{\star r} \star X^{\star r}$: in particular, the dimensions of all Hadamard powers of $X$, thus of $\eta_r(X)$, are preserved by the diagonal change of coordinates given by the Hadamard product with $q$. Thus, it is enough to prove (1) under the assumption $(1:\cdots:1) \in C$. 
    
    As mentioned in \Cref{sec:definitions}, if $(1:\cdots:1) \in C$, then $\eta_r(C) = C^{\star r}$. Suppose by contradiction that there exists $2 \leq s \leq N$ such that $\dim C^{\star s} \leq s-1$ but $\dim C^{\star r} = r$ for $r \leq s-1$. Then, since $C^{\star (s-1)} \subset C^{\star s}$, we have that $s-1 \leq \dim C^{\star s} \leq s-1$, from which $\dim C^{\star s} = s-1 = \dim C^{\star (s-1)}$. Since Hadamard powers are irreducible (\Cref{rmk:hadamardirreducible}), we have $C^{\star s} = C^{\star (s-1)}$. This implies that $C^{\star (s+i)} = C^{\star (s-1)}$ for all $i \geq 0$: in particular, $C^{\star m} \neq \bbP^N$ for all $m\geq 1$, hence $\Hrk_C^\circ = \infty$. This is a contradiction: indeed, by \Cref{rmk:curve}, $C$ is strongly concise and, in particular, it has finite generic Hadamard rank by \Cref{thm:maximum_finite}.
\end{proof}

\begin{remark}
    For a given irreducible curve $C \subset \bbP^N$ and a general linear transformation $g \in GL_N$, it holds $g(C) \cap \Delta_{N-2} = \emptyset$, hence \Cref{thm:curves_finite} applies to $g(C)$. 
\end{remark}

\begin{remark}
    Note that the assumption $X \cap \Delta_{N-n-1} = \emptyset$ in \Cref{lemma:borderequalsrank}, and thus the assumption $C\cap \Delta_{N-2} = \emptyset$ in \Cref{thm:curves_finite}, is needed: indeed, \Cref{example:border_smallerthan_rank} provides a counterexample to the equality between rank and border rank since the curve $C \subset \bbP^2$ contains the coordinate point $(1:0:0)$. 
\end{remark}

The bound in \Cref{prop:bound_zeroes} gives as a consequence that if $X$ is strongly concise, then $\Hrk_X^{\max}\leq 3N$. However, if the variety $X$ has not too many zeroes, we can show that $\Hrk_X^{\max}\leq N$. This is the content of \Cref{thm:maximum_notmanyzeroes}, of which we now
give a proof.

\begin{proof}[Proof of \Cref{thm:maximum_notmanyzeroes}]
    The case $n = 1$ is \Cref{thm:curves_finite}. If $n \geq 2$, let $L \subset \bbP^N$ be a general linear space of codimension $n-1$. By the theorem of Bertini (see \cite[Theorem 3.3.1]{lazarsfeldpositivity} or \cite[Section I.6]{jouanolou1983theoremes}), $C = X \cap L$ is an irreducible curve and $\Hrk_X(p) \leq \Hrk_C(p)$ for all $p \in \bbP^N$. 
    
    We notice that $C \cap \Delta_{N-2} = \emptyset$. In fact, suppose by contradiction that $C \cap (H_i \cap H_j) = (X \cap L) \cap (H_i \cap H_j) \neq \emptyset$ for $i \neq j$. We recall that $\Delta_{N-n-1} = \bigcup_{k_1,\ldots, k_{n+1}} (H_{k_1} \cap \cdots \cap H_{k_{n+1}})$. Since $L$ is generic, by semicontinuity:
    \[
        \dim( X \cap H_i \cap H_j \cap H_{k_1} \cap \cdots \cap H_{k_{n-1}}) \geq \dim(X \cap H_i \cap H_j \cap L) = \dim (C \cap H_i \cap H_j) \geq 0
    \]
    for some distinct $i,j,k_1,\ldots,k_{n-1}$, hence $X \cap \Delta_{N - n -1} \neq \emptyset$, a contradiction.
    
    In conclusion, \Cref{thm:curves_finite} applies to $C$, and then $\Hrk_X(p) \leq \Hrk_C(p) \leq N$ for all $p \in \bbP^N$.
\end{proof}

We notice that the bound in \Cref{thm:maximum_notmanyzeroes} is sharp.

\begin{example}\label{example:sharp}
Let $C\subset \bbP^N$ be an irreducible curve such that $C\cap \Delta_{N-2}=\emptyset$. Let $p = (1:0:\cdots :0)$ or any other coordinate point. By \Cref{thm:curves_finite}, we have that $\Hrk_C(p) \leq N$. However, since each point of $C$ has at most one coordinate equal to zero, $\Hrk_C(p) \geq N$, hence $\Hrk_C(p) = N$.
\end{example}

\bibliographystyle{alphaurl}
\bibliography{hadarank_finiteness.bib}

\end{document}